\documentclass[11pt]{article}
\usepackage{amsmath}
\usepackage{amssymb}
\usepackage{amsthm}
\textwidth=175truemm \textheight=245truemm \voffset-2.5truecm
\hoffset-2truecm \hfuzz8pt
\begin{document}
\def\N{{\mathbb N}}
\def\Z{{\mathbb Z}}
\def\R{{\mathbb R}}
\def\C{{\mathbb C}}
\def\T{{\mathbb T}}
\def\K{{\mathbb K}}
\def\U{\overline{U}}
\def\D{{\mathbb D}}
\def\epsilon{\varepsilon}
\def\phi{\varphi}
\def\kappa{\varkappa}
\def\leq{\leqslant}
\def\geq{\geqslant}
\def\RR{{\cal R}}
\def\Re{{\tt Re}\,}
\def\ssub#1#2{#1_{{}_{{\scriptstyle #2}}}}
\def\li{\,\text{\rm span}\,}
\def\Y{{\cal Y}}
\def\slim{\mathop{\hbox{$\overline{\hbox{\rm lim}}$}}}
\def\ilim{\mathop{\hbox{$\underline{\hbox{\rm lim}}$}}}
\def\X{{\cal X}}
\def\liu{\vrule width0pt height14pt depth0pt}
\def\lid{\vrule width0pt height0pt depth10pt}
\font\CC=cmti10 scaled \magstep{2}
\def\e{\hbox{\CC e}}

\newtheorem{theorem}{Theorem}[section]
\newtheorem{lemma}[theorem]{Lemma}
\newtheorem{corollary}[theorem]{Corollary}
\newtheorem{proposition}[theorem]{Proposition}
\newtheorem{question}[theorem]{Question}
\newtheorem{definition}[theorem]{Definition}
\newtheorem{example}[theorem]{Example}
\newtheorem{remark}[theorem]{Remark}

\newtheorem*{thmM}{Theorem M}
\newtheorem*{thmMb}{Theorem Mb}
\newtheorem*{thmS}{Theorem S}

\numberwithin{equation}{section}
\renewcommand{\theenumi}{\roman{enumi}}

\title{Pointwise universal trigonometric series}

\author{S.~Shkarin}

\date{}

\maketitle

\smallskip

\rm\normalsize

\begin{abstract} A series $S_a=\sum\limits_{n=-\infty}^\infty a_nz^n$ is
called a {\it pointwise universal trigonometric series} if for any
$f\in C(\T)$, there exists a strictly increasing sequence
$\{n_k\}_{k\in\N}$ of positive integers such that
$\sum\limits_{j=-n_k}^{n_k} a_jz^j$ converges to $f(z)$ pointwise on
$\T$. We find growth conditions on coefficients allowing and
forbidding the existence of a pointwise universal trigonometric
series. For instance, if $|a_n|=O(\e^{\,|n|\ln^{-1-\epsilon}\!|n|})$
as $|n|\to\infty$ for some $\epsilon>0$, then the series $S_a$ can
not be pointwise universal. On the other hand, there exists a
pointwise universal trigonometric series $S_a$ with
$|a_n|=O(\e^{\,|n|\ln^{-1}\!|n|})$ as $|n|\to\infty$.

\bigskip

\noindent{\bf Keywords:} \ Universal series, trigonometric series,
power series

\smallskip

\noindent{\bf MSC:} \ 30B30, 32A05

\end{abstract}

\section{Introduction}

Throughout this article $\C$ is the field of complex numbers,
$\T=\{z\in\C:|z|=1\}$, $\R$ is the field of real numbers, $\Z$ is
the set of integers, $\Z_+$ is the set of non-negative integers and
$\N$ is the set of positive integers. For $z\in\C$, $\Re z$ is the
real part of $z$. For a compact metric space $K$, $C(K)$ stands for
the space of continuous complex-valued functions on $K$. An {\it
interval} in $\T$ is a closed connected subset of $\T$ of positive
length. Symbol $\mu$ stands for the normalized Lebesgue measure on
$\T$. Recall that {\it Fourier coefficients} of $f\in L_1(\T)$ or of
a finite Borel $\sigma$-additive complex valued measure $\nu$ on
$\T$ are given by the formula
$$
\widehat f(n)=\int_\T f(z)z^{-n}\,\mu(dz)\ \ \text{and}\ \ \widehat
\nu(n)=\int_\T z^{-n}\,\nu(dz)\ \ \text{for}\ \ n\in\Z.
$$
In particular, $\widehat f(n)=\widehat \nu(n)$ if $\nu$ is the
measure with the density $f$ with respect to $\mu$. Recall also that
$\{z^n\}_{n\in\Z}$ is an orthonormal basis in $L_2(\T)=L_2(\T,\mu)$,
which yields
\begin{equation}\label{onb}
\langle f,g\rangle_{L_2(\T)}=\int_\T f(z)\overline{g(z)}\,\mu(dz)=
\sum_{n=-\infty}^\infty \widehat f(n)\overline{\widehat g(n)}\ \
\text{for any $f,g\in L_2(\T)$.}
\end{equation}

\begin{definition}\label{d1} \rm Let $X$ be a topological vector space
and let $\{x_n\}_{n\in\Z_+}$ be a sequence in $X$. We say  that
$\sum\limits_{n=0}^\infty x_n$ is a {\it universal series} if for
every $x\in X$, there exists a strictly increasing sequence
$\{n_k\}_{k\in\Z_+}$ of positive integers such that
$\smash{\sum\limits_{j=0}^{n_k} x_j\to x}$ as $k\to\infty$.
Similarly if $\{x_n\}_{n\in\Z}$ is a bilateral sequence in $X$, then
we say that $\sum\limits_{n=-\infty}^\infty x_n$ is a {\it universal
series} if for every $x\in X$, there exists a strictly increasing
sequence $\{n_k\}_{k\in\Z_+}$ of positive integers such that
$\smash{\sum\limits_{j=-n_k}^{n_k} x_j\to x}$ as $k\to\infty$.
\end{definition}

\begin{remark}\label{rr}\rm Note that if $X$ is metrizable, then universality of
$\smash{\sum\limits_{n=0}^\infty x_n}\lid$ $\bigl($respectively, of
$\smash{\sum\limits_{n=-\infty}^\infty x_n}\bigr)$ is equivalent to
density of $\{S_n:n\in\Z_+\}$ in $X$, where
$S_n=\sum\limits_{j=0}^{n} x_j$ $\bigl($respectively,
$S_n=\sum\limits_{j=-n}^{n} x_j\bigr)$. If $X$ is non-metrizable,
the latter may fail due to the fact that a closure of a set can
differ from its sequential closure.
\end{remark}

Universal series have been studied by many authors. Those interested
in the subject ought to look into \cite{u-s}, which together with a
systematic approach to the theory of universal series provides a
large survey part and extensive list of references on the subject.
We would also like to mention papers \cite{u13,u2,u6,u1,u7,u3,u5},
which are not in the list of references in \cite{u-s} and the papers
\cite{tri1,tri4,tri3,tri2,tri5} dealing specifically with universal
trigonometric series. We start by mentioning two old results on
universal series. Seleznev \cite{sel,u-s} proved the following
theorem on universal power series.

\begin{thmS}There exists a sequence $\{a_n\}_{n\in\Z_+}$ of complex
numbers such that for each entire function $f$ and any compact
subset $K$ of $\C\setminus\{0\}$ with connected $\C\setminus K$,
there is a strictly increasing sequence $\{n_k\}_{k\in\Z_+}$ of
positive integers for which $\sum\limits_{j=0}^{n_k} a_jz^j$
converges to $f(z)$ uniformly on $K$.
\end{thmS}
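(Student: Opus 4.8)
The plan is to build the coefficients $\{a_n\}_{n\in\Z_+}$ inductively in consecutive blocks, so that the partial sums $P_N(z)=\sum_{j=0}^N a_jz^j$ approximate, along a subsequence, every member of a countable family of targets rich enough to capture all pairs $(K,f)$. The first ingredient is a cofinal countable family $\{K_m\}_{m\in\N}$ of compact subsets of $\C\setminus\{0\}$, each with connected complement, such that every compact $K\subset\C\setminus\{0\}$ with $\C\setminus K$ connected is contained in some $K_m$. Such a family exists by a routine planar topology argument: given such a $K$, set $\delta=\mathrm{dist}(0,K)>0$, cover $K$ by the finitely many closed squares of a square grid of mesh $1/n$ (with $n$ large) that meet $K$, let $L_n'$ be their union, and let $L_n$ be $L_n'$ together with all bounded components of $\C\setminus L_n'$; then $L_n$ is a finite union of grid squares, $\C\setminus L_n$ is connected, $K\subset L_n$, and for $n$ large $0\notin L_n$ --- otherwise $L_n'$ would separate $0$ from $\infty$ for all large $n$, hence so would $K$ (since $\mathrm{dist}(L_n',K)\to0$), contradicting connectedness of $\C\setminus K$. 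Letting $\{K_m\}$ enumerate all such finite unions of grid squares that avoid $0$ and have connected complement gives the required family. I also fix a countable set $\Pi$ of polynomials with coefficients in $\mathbb{Q}+i\mathbb{Q}$, dense, in each space $C(K_m)$, among all polynomials.

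The heart of the argument is the following block extension step, and it is precisely here that the hypothesis $0\notin K$ is used. Suppose $a_0,\dots,a_N$ have been chosen, put $P_N(z)=\sum_{j=0}^N a_jz^j$, and let $K=K_m$, $p\in\Pi$ and $\varepsilon>0$ be given. Since $0\notin K$, the function $z^{-(N+1)}\bigl(p(z)-P_N(z)\bigr)$ is holomorphic on $\C\setminus\{0\}$, a neighbourhood of $K$; as $\C\setminus K$ is connected, Runge's theorem provides a polynomial $h$ with
\[ \bigl\|\, z^{-(N+1)}(p-P_N)-h\,\bigr\|_{C(K)} < \frac{\varepsilon}{1+\max_{z\in K}|z|^{N+1}}. \]
Define $a_{N+1},\dots,a_M$, where $M=N+1+\deg h$ (or $M=N+1$, $a_{N+1}=0$ if $h=0$), by $\sum_{j=N+1}^{M}a_jz^j=z^{N+1}h(z)$. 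Then $P_M=P_N+z^{N+1}h$, so
\[ \|P_M-p\|_{C(K)}=\bigl\|z^{N+1}\bigl(h-z^{-(N+1)}(p-P_N)\bigr)\bigr\|_{C(K)}\leq\Bigl(\max_{z\in K}|z|^{N+1}\Bigr)\bigl\|h-z^{-(N+1)}(p-P_N)\bigr\|_{C(K)}<\varepsilon. \]
Thus any target $(K_m,p,\varepsilon)$ can be met by a partial sum of any prescribed minimal degree, using only new high-index coefficients, so that coefficients chosen earlier remain untouched.

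To finish, enumerate all triples $(K_m,p,\varepsilon)$ with $m\in\N$, $p\in\Pi$ and $\varepsilon\in\{1,\tfrac12,\tfrac13,\dots\}$ as a sequence $(\tau_s)_{s\in\N}$ in which every triple occurs infinitely often, and apply the block extension step repeatedly: at stage $s$, enlarge the current partial sum to a new one meeting $\tau_s$. Since the degrees strictly increase along the stages, this defines a sequence $\{a_n\}_{n\in\Z_+}$ for which every triple $(K_m,p,\varepsilon)$ is met by infinitely many partial sums $P_n$, i.e.\ $\|P_n-p\|_{C(K_m)}<\varepsilon$ for infinitely many $n$. Now let $f$ be entire, let $K\subset\C\setminus\{0\}$ be compact with $\C\setminus K$ connected, and let $k\in\N$. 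Choose $K_m\supset K$ from the family, and a Taylor polynomial of $f$ of sufficiently high degree with its coefficients perturbed into $\mathbb{Q}+i\mathbb{Q}$, obtaining $p\in\Pi$ with $\|f-p\|_{C(K_m)}<\tfrac1{2k}$; then infinitely many $n$ satisfy $\|P_n-f\|_{C(K)}\leq\|P_n-p\|_{C(K_m)}+\|p-f\|_{C(K_m)}<\tfrac1k$. Choosing, for each $k$, such an integer $n=n_k$ with $n_k>n_{k-1}$ produces a strictly increasing sequence $\{n_k\}$ with $\sum_{j=0}^{n_k}a_jz^j\to f$ uniformly on $K$, as required.

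The main difficulty is concentrated in the block extension step: the point is that, because $0\notin K$, one may divide by $z^{N+1}$, approximate by Runge's theorem, and remultiply to obtain an approximating polynomial supported on indices $>N$, which is exactly what keeps the ``append new coefficients'' scheme self-consistent. (Without $0\notin K$ this fails: no polynomial divisible by $z^{N+1}$ comes within $1$ of the constant function $1$ at the origin, so such a $K$ can never be handled.) Producing the cofinal family $\{K_m\}$ is the only other point that requires attention, and it is elementary planar topology.
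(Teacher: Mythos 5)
The paper does not actually prove Theorem~S; it cites Seleznev \cite{sel} and uses the statement as a black box, so there is no ``paper's own proof'' to compare against. Your argument is a correct, self-contained proof and is the standard one: the decisive block-extension step --- writing $p-P_N=z^{N+1}\cdot\bigl(z^{-(N+1)}(p-P_N)\bigr)$, which makes sense because $0\notin K$, approximating the bracketed function on $K$ by a polynomial $h$ via Runge's theorem (legitimate because $\hat\C\setminus K$ is connected, equivalently $\C\setminus K$ is connected for compact $K\subset\C$), and then appending the coefficients of $z^{N+1}h$ --- is exactly what lets one append new high-index coefficients without disturbing those already fixed, and the diagonal/enumeration argument then routinely yields the universal sequence. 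Two small points that a fully formal write-up should tighten: (a) in constructing the cofinal family $\{K_m\}$, the inference ``$L_n'$ separates $0$ from $\infty$ for all large $n$, hence so would $K$'' is best justified by noting that any path from $0$ to $\infty$ in the open connected set $\C\setminus K$ stays at a positive distance $\eta$ from $K$, so once $\sqrt2/n<\eta$ the path avoids $L_n'$, contradicting the separation; and (b) one should check that filling in the bounded holes of $L_n'$ again produces a finite union of closed grid squares (this holds: a closed grid square not meeting $K$ has its interior in a single component of $\C\setminus L_n'$, and lies in $L_n$ exactly when that component is bounded). Both are the routine planar-topology verifications you indicated, and neither affects the correctness of the proof.
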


Menshov \cite{men,u-s} constructed a sequence $c\in c_0(\Z)$ such
that the series $\sum\limits_{n=-\infty}^\infty c_nz^n$ is universal
in the space $L_0(\T)$ of (equivalence classes of) $\mu$-measurable
functions $f:\T\to\C$ with the measure convergence topology. Such
series are referred to as universal trigonometric series. Since
$L_0(\T)$ is metrizable, $C(\T)$ is dense in $L_0(\T)$ and every
measure convergent sequence of functions has a subsequence, which is
almost everywhere convergent, the result of Menshov is equivalent to
the following statement.

\begin{thmM}There exists $a\in c_0(\Z)$ such that for each $f\in
C(\T)$, there is a strictly increasing sequence $\{n_k\}_{k\in\Z_+}$
of positive integers for which $\smash{\sum\limits_{j=-n_k}^{n_k}
a_jz^j}\liu$ converges to $f(z)$ for almost all $z\in\T$ with
respect to the Lebesgue measure.
\end{thmM}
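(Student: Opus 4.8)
The plan is to reduce Theorem~M to a single quantitative approximation lemma and then to prove that lemma by an explicit kernel construction of Menshov type. First I would note that it suffices to produce $a\in c_0(\Z)$ such that the symmetric partial sums $S_n=\sum_{|j|\le n}a_jz^j$, $n\in\Z_+$, form a dense subset of $L_0(\T)$: for then, given $f\in C(\T)\subset L_0(\T)$, some subsequence $(S_{n_k})$ converges to $f$ in $L_0(\T)$, i.e.\ in measure, and a further subsequence converges to $f$ $\mu$-almost everywhere. To build such an $a$ I would argue by Baire category in the Banach, hence Baire, space $c_0(\Z)$. Fix a countable family $\{g_m\}$ of trigonometric polynomials dense in $L_0(\T)$ and a translation-invariant metric $d$ generating the topology of $L_0(\T)$, and for $m,s\in\N$ put $U_{m,s}=\{a\in c_0(\Z):d(S_n(a),g_m)<1/s\ \text{for some}\ n\in\N\}$. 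Since $a\mapsto S_n(a)$ is continuous from $c_0(\Z)$ into $C(\T)\subset L_0(\T)$, each $U_{m,s}$ is open, and $\bigcap_{m,s}U_{m,s}$ is precisely the set of $a$ for which $\{S_n(a):n\in\Z_+\}$ is dense in $L_0(\T)$; so it remains to prove that each $U_{m,s}$ is dense in $c_0(\Z)$, for then $\bigcap_{m,s}U_{m,s}$ is comeager and in particular nonempty.

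Density of $U_{m,s}$ reduces at once to the following, which is the heart of the matter: \emph{for every trigonometric polynomial $h$, every $\epsilon>0$ and every $N\in\N$ there exist $M>N$ and a trigonometric polynomial $P(z)=\sum_{N<|j|\le M}p_jz^j$ with $|p_j|<\epsilon$ for all $j$ and $\mu(\{z\in\T:|P(z)-h(z)|>\epsilon\})<\epsilon$.} Indeed, given $a^0\in c_0(\Z)$ and $\rho>0$, choose $N$ with $|a^0_j|<\rho/2$ for $|j|>N$, apply this Key Lemma to $h=g_m-\sum_{|j|\le N}a^0_jz^j$ (a trigonometric polynomial) with $\epsilon$ small in terms of $\rho$ and $s$, and let $a$ agree with $a^0$ on $\{|j|\le N\}\cup\{|j|>M\}$ and equal $(p_j)$ on $\{N<|j|\le M\}$. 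Then $a\in c_0(\Z)$, $\|a-a^0\|_\infty<\rho$, and $S_M(a)=\sum_{|j|\le N}a^0_jz^j+P$, so $S_M(a)-g_m=P-h$ is as small in measure as we wish, i.e.\ $a\in U_{m,s}$.

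For the Key Lemma I would first approximate $h$ uniformly, to within $\epsilon/4$, by a step function $\sum_{k=1}^pc_k\mathbf 1_{I_k}$ with $I_1,\dots,I_p$ arcs partitioning $\T$ and $|c_k|\le\|h\|_\infty$ (uniform continuity of $h$). By linearity it then suffices, for a single arc $I$ and a single symmetric frequency band $\{j:N'<|j|\le N''\}$ whose endpoints may be chosen as large and as widely separated as needed, to produce a trigonometric polynomial $\Theta_I$ supported on that band with $\sup_j|\widehat\Theta_I(j)|$ arbitrarily small and $\mu(\{|\Theta_I-\mathbf 1_I|>\delta\})<\delta$ for $\delta$ arbitrarily small: one then assigns $I_1,\dots,I_p$ to disjoint bands $N_{k-1}<|j|\le N_k$ with $N_0=N$, whence $P=\sum_kc_k\Theta_{I_k}$ is supported on $N<|j|\le M:=N_p$, its coefficients have modulus at most $\|h\|_\infty\max_k\sup_j|\widehat\Theta_{I_k}(j)|$ since the bands are disjoint, and summing the $p$ exceptional sets yields $\mu(\{|P-h|>\epsilon\})<\epsilon$ once $\delta$ is small enough. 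The building block $\Theta_I$ itself would be assembled from translates $\kappa(\bar wz)$ of a single kernel $\kappa$ that is close to $1$ on a very short arc about $1$ and negligible off a somewhat longer one, all of whose Fourier coefficients have modulus comparable to the reciprocal of its degree: one tiles $I$ by short arcs, superposes the corresponding translates, and relocates and spreads the resulting packet across the prescribed high band so as to drive every coefficient below the target.

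The step I expect to be the real obstacle is precisely this last construction --- Menshov's lemma --- because three demands pull against one another. The coefficients must be uniformly small, which forces the frequency band to be long and each kernel to be a superposition of many characters; the approximation is required only in measure, so the kernels must reproduce the whole ``mass'' of $\mathbf 1_I$ and not merely a comb of thin spikes, which limits how concentrated each kernel may be and hence how economically the packet can tile $I$; and the band lies entirely in high frequencies, so none of the low-frequency Fourier data of $\mathbf 1_I$, where its bulk sits, is directly available --- in particular any admissible $\Theta_I$ has vanishing mean while being close to $1$ on most of $\T$, hence is compelled to take large values on a set of small measure. What makes the lemma true, and what the construction must exploit, is exactly that closeness is needed only in measure: $\Theta_I$ may blow up on a set of tiny measure, and a careful accounting of all these tiny exceptional sets --- over the many kernels in a packet and over the $p$ arcs --- keeps the total error set below $\epsilon$.
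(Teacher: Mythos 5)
The paper does not prove Theorem~M: it is stated as Menshov's result and derived from the universality of Menshov's series in $L_0(\T)$ by exactly the subsequence reduction that your opening paragraph gives (convergence in measure passes to an almost everywhere convergent subsequence), with the underlying construction cited to Menshov \cite{men} and to the survey \cite{u-s}. Your first paragraph therefore reproduces the paper's entire derivation of Theorem~M from Menshov's $L_0(\T)$ result. Your Baire-category framework is also the standard abstract one (it is, in substance, the argument the paper isolates as Lemma~\ref{coco} and uses to prove Theorem~\ref{t3}), and your reduction from density of the $U_{m,s}$ to the Key Lemma is carried out correctly.

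There is, however, a genuine gap: the Key Lemma is stated but not proved. That lemma --- approximating an arbitrary trigonometric polynomial in measure by one whose spectrum lies in a prescribed high symmetric band $\{j:N<|j|\le M\}$ with all coefficients uniformly below a prescribed bound --- is the entire content of Menshov's theorem; everything you write before it is soft. Your final paragraph is an honest but purely heuristic discussion of why the lemma should hold, not a construction. The building block $\Theta_I$ is never produced: you do not exhibit a kernel with all Fourier coefficients $O(1/\deg)$, near $1$ on a short arc and negligible off a longer one; you do not explain how ``relocating and spreading the resulting packet across the prescribed high band'' is achieved for a \emph{symmetric} band (modulation by $z^{N'}$ only moves the spectrum to a one-sided band, so some symmetrization and a check that it preserves both the coefficient bound and the in-measure approximation is needed); and you do not verify that the union of exceptional sets over the many translates tiling $I$ stays below $\delta$. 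You explicitly flag this as the expected obstacle, and it is: as written, the hardest step is missing and the proof is incomplete.
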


We study the natural question whether one can replace almost
everywhere convergence by pointwise convergence. In other words, we
deal with universal trigonometric series in the space $C_p(\T)$,
being $C(\T)$ endowed with the pointwise convergence topology. Our
first observation is an easy consequence of Theorem~S.

\begin{proposition}\label{t1} There exists a sequence
$\{a_n\}_{n\in\Z_+}$ in $\C$ such that for each $f\in C(\T)$, there
is a strictly increasing sequence $\{n_k\}_{k\in\Z_+}$ of positive
integers for which $\sum\limits_{j=0}^{n_k} a_jz^j$ converges to
$f(z)$ for all $z\in\T$.
\end{proposition}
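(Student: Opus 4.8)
\noindent The plan is to deduce this from Theorem~S. Fix a sequence $\{a_n\}_{n\in\Z_+}$ as provided by Theorem~S and write $P_n(z)=\sum_{j=0}^n a_jz^j$. The obstruction to applying Theorem~S directly with $K=\T$ is that $\C\setminus\T$ is disconnected; but since we only need pointwise convergence on $\T$, it suffices to exhaust $\T$ by compacta to which Theorem~S does apply. I would take, for $m\in\N$, the closed arc $J_m=\{e^{i\theta}:\theta\in[1/m,\,2\pi-1/m]\}$ and set $K_m=J_m\cup\{1\}$. Then each $K_m$ is a compact subset of $\C\setminus\{0\}$, the sets $K_m$ increase with $m$, $\bigcup_{m\in\N}K_m=\T$, and $\C\setminus K_m$ is connected: $\C\setminus J_m$ is connected because an arc does not disconnect the plane, and deleting one further point from a connected open subset of $\C$ leaves it connected.

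Next, for a given $f\in C(\T)$ I would show that each restriction $f|_{K_m}$ is a uniform limit of polynomials, so that there is a polynomial $p_m$ with $\sup_{K_m}|f-p_m|<1/m$. One clean route: by the Stone--Weierstrass theorem $f$ is a uniform limit on $\T$ of Laurent polynomials, and on $K_m$ every Laurent polynomial is holomorphic on a neighbourhood of $K_m$ (since $0\notin K_m$), hence a uniform limit of polynomials there by Runge's theorem; composing the two approximations and enlarging the degree gives $p_m$. (Equivalently, invoke Lavrentiev's theorem, $K_m$ being compact with empty interior and connected complement.)

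Then I would build the sequence $\{n_k\}$ recursively. Given $n_1<\dots<n_{k-1}$, apply Theorem~S to the entire function $p_k$ and the compact set $K_k$: the partial sums $P_n$ converge to $p_k$ uniformly on $K_k$ along a strictly increasing sequence of indices, so one can choose $n_k>n_{k-1}$ with $\sup_{K_k}|P_{n_k}-p_k|<1/k$, and hence $\sup_{K_k}|P_{n_k}-f|<2/k$. Finally, for any $z\in\T$ there is $m_0$ with $z\in K_k$ for all $k\ge m_0$ (as the $K_m$ increase to $\T$), so $|P_{n_k}(z)-f(z)|<2/k\to0$; thus $P_{n_k}\to f$ pointwise on $\T$, as required.

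The only genuinely delicate point is the first step: inside $\T$ the natural compacta with connected complement are arcs, and exhausting $\T$ by arcs loses a point. The device of adjoining that point back to each $K_m$ — harmless because removing a point from a planar connected open set keeps it connected, and because $f$ remains uniformly polynomial-approximable on the slightly enlarged set — is what makes the deduction from Theorem~S go through. Everything else is a routine diagonal argument.
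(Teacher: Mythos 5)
Your proof is correct and follows essentially the same route as the paper: exhaust $\T$ by an increasing sequence of compacta with connected complement, approximate $f$ by polynomials on each, then invoke Theorem~S and diagonalize. The only cosmetic difference is that the paper takes $J_k=\{e^{it}:0\leq t\leq 2\pi-k^{-1}\}$, an arc already containing the point $1$ (so no point need be re-adjoined), and cites Mergelyan directly rather than Stone--Weierstrass plus Runge; your extra step of forming $K_m=J_m\cup\{1\}$ is sound but avoidable.
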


Thus there is a universal power series in $C_p(\T)$. The next
natural step is to try to figure out whether one can make
coefficients of a universal power or at least a trigonometric series
in $C_p(\T)$ small. It turns out in considerable contrast with
Theorem~M that passing from almost everywhere convergence to
pointwise convergence forces the coefficients of a universal
trigonometric series to grow almost exponentially. In order to
formulate the explicit result, we introduce the following notation.

\begin{definition}\label{d2} \rm We say that a sequence $\{c_n\}_{n\in\Z_+}$
of positive real numbers belongs to the class $\X_-$ if
$$
\lim_{n\to\infty} n(c_{n+1}-c_n)=+\infty\ \ \ \text{and}\ \ \
\sum_{n=0}^\infty \frac{c_n}{n^2+1}<\infty.
$$
We say that a sequence $\{c_n\}_{n\in\Z_+}$ of positive real numbers
belongs to the class $\X_+$ if
$$
\lim_{n\to\infty} n(c_{n+1}-c_n)=+\infty\ \ \ \text{and}\ \ \
\sum_{n=0}^\infty \frac{c_n}{n^2+1}=\infty.
$$
\end{definition}

\begin{remark}\label{r1} \rm It is easy to see that for any $\epsilon>0$,
the sequence $\{(n+1)\ln^{-1-\epsilon}(n+2)\}_{n\in\Z_+}$ belongs to
$\X_-$. On the other hand, the sequence
$\{(n+1)\ln^{-1}(n+2)\}_{n\in\Z_+}$ belongs to $\X_+$.
\end{remark}

For a bilateral sequence $a=\{a_n\}_{n\in\Z}$ of complex numbers and
$m\in\Z_+$ we denote
$$
S^a_m(z)=\sum_{k=-m}^m a_kz^k.
$$
In other words, $S^a_m:\T\to \C$ are partial sums of the
trigonometric series with coefficients $a_n$. For an interval $J$ in
$\T$, symbol $\RR_a(J)$  stands for the set of $g\in C(J)$ such that
there is a strictly increasing sequence $\{m_k\}_{k\in\Z_+}$ of
positive integers for which $S^a_{m_k}(z)\to g(z)$ for any $z\in J$.

\begin{theorem}\label{t2} Let $\{a_n\}_{n\in\Z}$ be a bilateral
sequence of complex numbers such that there exists $c\in\X_-$ for
which $|a_{n}|\leq \e^{\,c_{|n|}}$ for any $n\in\Z$. Then for any
interval $J$ in $\T$, the set $\RR_a(J)$ contains at most one
element. In particular, the trigonometric series
$\smash{\sum\limits_{n=-\infty}^\infty a_nz^n}\liu$ is not universal
in $C_p(\T)$.
\end{theorem}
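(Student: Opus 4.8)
The plan is to show that if two functions $g_1, g_2 \in \RR_a(J)$, then $g_1 = g_2$ on $J$, by deriving a quantitative lower bound on how fast partial sums of $S_a$ must grow on $J$ unless the series is, in a suitable averaged sense, Cauchy there — and then showing the hypothesis $c \in \X_-$ makes the allowed growth too slow to accommodate two distinct limits. First I would reduce to showing: if there are two increasing sequences $\{m_k\}$ and $\{m_k'\}$ with $S^a_{m_k} \to g_1$ and $S^a_{m_k'} \to g_2$ pointwise on $J$, then, by interleaving, we may assume there is a single increasing sequence $\{n_k\}$ along which $S^a_{n_k} - S^a_{n_{k-1}}$ has pointwise limit $h := g_1 - g_2$ on $J$ (after passing to subsequences and relabelling). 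So it suffices to prove that no nonzero $h$ arises as a pointwise limit on $J$ of a sequence of "blocks" $P_k(z) = \sum_{n_{k-1} < |j| \leq n_k} a_j z^j$ with $|a_j| \leq \e^{c_{|j|}}$.

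The heart of the argument is a Turán-type / Bernstein-type lower bound: a trigonometric polynomial $P$ of degree $N$ that is pointwise bounded on an interval $J$ cannot have coefficients that are too large, and more precisely its coefficients are controlled by its sup-norm (or even by a weak substitute) on $J$ through a factor that grows only like a power of $N$ — something of the form $|\widehat P(j)| \leq C(J)\, N^{A} \sup_{J}|P|$ for $|j| \le N$, or, since we only have pointwise convergence (not uniform bounds), a version using a quantitative unique-continuation / Remez-type inequality on $\T$. The key steps, in order, are: (1) pass to blocks and a single sequence as above; (2) fix an interval $J' \subset J$ and note that by Egorov-type reasoning the blocks $P_k$ are uniformly bounded on a subset of $J'$ of positive measure, hence (by a Remez/Nagy inequality) uniformly bounded on a fixed subinterval $J'' \subset J$, with bound $M$ independent of $k$; (3) apply the coefficient estimate $|a_j| \cdot \mathbf 1_{n_{k-1} < |j| \leq n_k} \leq C(J'')\, n_k^{A} M$ — but this is the wrong direction, so instead one argues in reverse: a \emph{nonzero} block with $n_{k-1}$ large has at least one coefficient of modulus $\geq$ something, and by the growth hypothesis that coefficient is $\leq \e^{c_{n_k}}$; meanwhile the condition $\lim n(c_{n+1}-c_n) = +\infty$ forces $c_n / \ln n \to \infty$... no: rather, I would run the argument through the \emph{series} side. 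The correct mechanism: combine the Cauchy–Schwarz/Parseval bound for $\|P_k\|_{L_2(\T)}$ in terms of $\sum_{n_{k-1}<|j|\le n_k}\e^{2c_{|j|}}$ with the convergence $\sum c_n/(n^2+1) < \infty$ to show $\|P_k\|_{L_2}$ is dominated by a convergent-type estimate that, together with a Nikolskii-type inequality relating $L_2$ and pointwise size on an interval, forces $h \equiv 0$.

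The main obstacle — and the step I would spend the most care on — is bridging pointwise convergence on $J$ (a very weak hypothesis, with no a priori uniform control) to a usable quantitative inequality. The natural tool is a two-constants / harmonic-measure estimate: extend $P_k(z) = \sum a_j z^j$ to $\sum_{-n_k}^{n_k} a_j w^j$ on an annulus $r \le |w| \le 1/r$; there $|P_k(w)| \le \e^{c_{n_k}} \sum_{|j|\le n_k} r^{-|j|} \lesssim \e^{c_{n_k}} r^{-n_k}$, so $\ln|P_k|$ is subharmonic with an $O(c_{n_k} + n_k \ln(1/r))$ bound off $\T$ and only a mild (pointwise-finite) bound on $J$. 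Choosing $r \to 1$ like $r = 1 - 1/n_k$ makes $n_k \ln(1/r)$ bounded, so the off-circle bound is essentially $c_{n_k}$; then a Jensen/harmonic-measure estimate on the region between $\T$ and the annulus boundary, with the pointwise bound on $J$ upgraded to an $L^p(J)$ bound via Egorov, yields $\|P_k\|_{L_2(\T)} \le \exp(\alpha(|J|)\,c_{n_k} + o(c_{n_k}))$ for some $\alpha < 1$ depending on the arclength of $J$ — and separately Parseval gives $\|P_k\|_{L_2(\T)}^2 = \sum_{n_{k-1}<|j|\le n_k}|a_j|^2$. The contradiction then comes from: if $h \not\equiv 0$ on $J$, infinitely many $P_k$ have a genuinely large coefficient, forcing $\sum_{n_{k-1}<|j|\le n_k}\e^{2c_{|j|}}$ — or rather the relevant block — to be bounded below in a way incompatible with $\sum c_n/(n^2+1)<\infty$ once one also invokes $n(c_{n+1}-c_n)\to\infty$ to control the \emph{shape} of $c$ (it guarantees $c_n$ is eventually convex-ish and $c_n/n \to 0$ is impossible, pinning the density of $\{j : c_j \text{ large}\}$). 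I would organize the write-up so that this last bookkeeping — translating the two inequalities into the single numerical statement "$\X_-$ is incompatible with a nonzero pointwise block-limit on $J$" — is isolated as a short lemma about sequences, keeping the analytic core (subharmonicity + Parseval) clean.
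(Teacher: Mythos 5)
Your proposal correctly identifies the central obstacle (pointwise convergence on $J$ gives no a priori uniform control on the partial sums) and correctly reduces to showing that nonzero ``blocks'' $p_n=S^a_{k_n}-S^a_{m_n}$ cannot converge pointwise on an interval to a nonzero limit. But the mechanism you propose for exploiting $c\in\X_-$ does not close, and I think the gaps are structural rather than cosmetic.

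\emph{The decisive tool is missing.} The hypothesis $c\in\X_-$ is, via Mandelbrojt's theorem (equivalently, the Denjoy--Carleman dichotomy), exactly the condition guaranteeing the existence of a \emph{nonzero} smooth $\rho:\T\to[0,\infty)$ supported in any prescribed interval $I$ with $|\widehat\rho(n)|\leq(n^2+1)^{-1}\e^{-c_{|n|}}$. This is what lets one ``beat'' the coefficient growth: pairing $\rho$ against a block by Parseval gives
$\bigl|\langle\rho,p_n\rangle\bigr|=\bigl|\sum_{m_n<|l|\leq k_n}Ca_l\overline{\widehat\rho(l)}\bigr|\leq|C|\sum_{m_n<|l|\leq k_n}(l^2+1)^{-1}=O(m_n^{-1})$, so the pairings tend to $0$. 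Your sketch never introduces any object of this kind; the condition $\sum c_n/(n^2+1)<\infty$ is used only as vague ``bookkeeping,'' which is where the argument should actually bite. (It is the exact threshold in the quasi-analyticity dichotomy; this cannot be replaced by soft growth considerations.)

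\emph{The harmonic-measure bound goes the wrong way.} Your two-constants estimate produces $\|P_k\|_{L_2(\T)}\leq\exp(\alpha c_{n_k}+o(c_{n_k}))$ with $\alpha<1$. Since $c_{n_k}\to\infty$, this is an upper bound that tends to $+\infty$ and contradicts nothing. To derive a contradiction you would need either an upper bound that stays bounded, or a lower bound that blows up faster; neither is supplied. Similarly, the claim that ``if $h\not\equiv 0$ on $J$, infinitely many $P_k$ have a genuinely large coefficient'' is unsupported and generally false: pointwise convergence to a nonzero limit on $J$ places no lower bound on any single Fourier coefficient of $P_k$. Finally, the asserted consequence that ``$c_n/n\to 0$ is impossible'' is the reverse of the truth: $\sum c_n/(n^2+1)<\infty$ forbids $c_n/n$ from being bounded below, and e.g.\ $c_n\sim n/\ln^2 n$ lies in $\X_-$ with $c_n/n\to 0$.

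\emph{The non-uniformity is handled differently in the paper.} Rather than Egorov/Remez upgrades (which would introduce degree-dependent losses), the paper normalizes so that $\Re h\geq 1$ on a subinterval $J_0$, then proves that for every interval $I\subset J_0$ the minimum of $\Re p_n$ on $I$ is eventually negative. This is shown by contradiction: if $\Re p_{j_n}\geq 0$ on $I$, then $\rho\,\Re p_{j_n}\geq 0$ pointwise converges to $\rho\,\Re h$, and Fatou's lemma gives a positive lower bound on $\liminf\int\rho\,\Re p_{j_n}$, contradicting the $o(1)$ Parseval estimate above. A nested-interval argument then produces a point $w\in J_0$ with $\Re p_{r_n}(w)<0$ for all $n$, forcing $\Re h(w)\leq 0$ and contradicting $\Re h(w)\geq 1$. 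This cleanly avoids any need for uniform or $L^p$ control on the partial sums, which is precisely the step your sketch cannot supply.
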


The following theorem shows that Theorem~\ref{t2} is sharp.

\begin{theorem}\label{t3} Let $c\in\X_+$. Then there exists a
sequence $a=\{a_n\}_{n\in\Z}$ of complex numbers such that
$|a_n|\leq \e^{\,c_{|n|}}$ for each $n\in\Z$ and the trigonometric
series $\sum\limits_{n=-\infty}^\infty a_nz^n$ is universal in
$C_p(\T)$. That is, $\RR_a(\T)=C(\T)$.
\end{theorem}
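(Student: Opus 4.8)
The plan is to construct $a$ directly, block by block in the frequency variable, so that the symmetric partial sums $S^a_m$ approximate a fixed countable dense subset of $C(\T)$ uniformly off a sequence of arcs which together avoid every point of $\T$. Fix an enumeration $\{P_j\}_{j\in\N}$ of the trigonometric polynomials with coefficients in $\mathbb Q+i\mathbb Q$ (a $\|\cdot\|_\infty$‑dense subset of $C(\T)$), fix $\epsilon_j\downarrow 0$, and fix closed arcs $I_j$ with $I_j$ strictly inside the sector $\{e^{i\theta}:2^{-j}<\theta<2^{-j+1}\}$; since these sectors are pairwise disjoint, each point of $\T$ lies in at most one $I_j$. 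I will build $a=\{a_n\}_{n\in\Z}$ with $|a_n|\le\e^{\,c_{|n|}}$ for all $n$ together with a strictly increasing sequence $m_1<m_2<\cdots$ such that
$$
\sup_{z\in\T\setminus I_j}\bigl|S^a_{m_j}(z)-P_j(z)\bigr|<\epsilon_j\qquad\text{for every }j\in\N .
$$
From this the theorem follows: given $f\in C(\T)$, choose $j_1<j_2<\cdots$ with $\|P_{j_k}-f\|_\infty\to0$; for each fixed $z\in\T$ one has $z\notin I_{j_k}$ for all large $k$, so $|S^a_{m_{j_k}}(z)-f(z)|\le\epsilon_{j_k}+\|P_{j_k}-f\|_\infty\to 0$, i.e.\ $S^a_{m_{j_k}}\to f$ pointwise on $\T$ and $f\in\RR_a(\T)$. (An explicit convergent subsequence is produced for each $f$, so the gap between density and sequential closure noted in Remark~\ref{rr} does not arise.)

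I would build the blocks inductively, reducing everything to one approximation statement. Suppose $a_n$ has been defined for $|n|\le m_{j-1}$ (with $m_0:=0$) so that $|a_n|\le\e^{\,c_{|n|}}$, and put $G_j:=P_j-S^a_{m_{j-1}}$, a trigonometric polynomial. The inductive step should be supplied by:

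\smallskip\noindent\textit{One-block lemma.} For every trigonometric polynomial $G$, every $\epsilon>0$, every $N\in\N$ and every arc $J\subseteq\T$ with $|J|\ge\kappa/N$ (with $\kappa$ an absolute constant) there is a trigonometric polynomial $Q$ with $\widehat Q(n)=0$ for $|n|\le N$, $|\widehat Q(n)|\le\e^{\,c_{|n|}}$ for all $n$, and $\sup_{z\in\T\setminus J}|Q(z)-G(z)|<\epsilon$.\smallskip

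\noindent Applying it with $G=G_j$, $\epsilon=\epsilon_j$, $N=m_{j-1}$ and $J=I_j$ (the bookkeeping will force $m_{j-1}\ge\kappa 2^{\,j}$, which makes the length constraint compatible with $I_j$ fitting in its sector), one obtains $Q_j$; set $a_n:=\widehat{Q_j}(n)$ for $m_{j-1}<|n|\le m_j:=\max(\deg Q_j,\,\kappa 2^{\,j+2})$, the extra coefficients being $0$. Since $\widehat{Q_j}$ vanishes on $\{|n|\le m_{j-1}\}$, we get $S^a_{m_j}=S^a_{m_{j-1}}+Q_j$ and hence $\sup_{\T\setminus I_j}|S^a_{m_j}-P_j|=\sup_{\T\setminus I_j}|Q_j-G_j|<\epsilon_j$, while the coefficient bound and $m_j\ge\kappa2^{\,j+2}$ propagate the induction. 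Because each $Q_j$ approximates $G_j$ — and so cancels whatever large values $S^a_{m_{j-1}}$ had picked up on the previous bad arcs — the sup‑norm of $S^a_{m_{j-1}}$ stays polynomially controlled in $m_{j-2}$ (the wild behaviour lives only on $I_{j-1}$ and is corrected at the next step), which keeps the quantities entering the One-block lemma under control from one stage to the next.

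The hard part is the One-block lemma, and it is precisely here that $c\in\X_+$ — the divergence $\sum_n c_n/(n^2+1)=\infty$ — is essential: for $c\in\X_-$ the lemma \emph{fails}, since otherwise the construction above would contradict Theorem~\ref{t2}. After a rotation one may assume $J$ is centred at $1$. Writing $Q=C+G$, the requirement becomes that of finding a trigonometric polynomial $C$ whose \emph{low-frequency part is prescribed} ($\widehat C(n)=-\widehat G(n)$ for $|n|\le N$, so that $Q$ is supported in $\{|n|>N\}$ once $N\ge\deg G$), whose remaining coefficients are supported in a window $N<|n|\le M$ and bounded there by $\e^{\,c_{|n|}}$, and which is uniformly small, $\|C\|_{L^\infty(\T\setminus J)}<\epsilon$, off the short arc $J$. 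Thus $C$ is a trigonometric polynomial concentrated on $J$ realising given low-frequency data, whose high-frequency tail — spread over the long range $N<|n|\le M$ and constrained only by the budget $\e^{\,c_{|n|}}$ — is used to suppress it to within $\epsilon$ on the rest of $\T$. Whether such a suppression is possible with finite $M$, and how long a window $M-N$ it costs relative to the admissible growth, is exactly the quasi-analyticity dichotomy separating $\X_-$ from $\X_+$: the natural construction of $C$ from modulated Chebyshev/Fej\'er-type kernels concentrated on $J$ gives, via a Remez-type inequality, coefficient bounds in terms of $\deg C$ and $|J|$ that one must then force below $\e^{\,c_{|n|}}$ by pushing the window up — legitimate because $c_n\to\infty$, but fitting inside the degree that the kernel construction forces only when $\sum c_n/n^2=\infty$ (the regularity hypothesis $n(c_{n+1}-c_n)\to+\infty$ being what makes the associated function convex and these estimates clean). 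Making this balance work is the technical core and the main obstacle of the proof.
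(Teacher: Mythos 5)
Your framework is sound and genuinely different from the paper's: you construct $a$ directly, block by block, confining the error of each approximating partial sum to a closed arc $I_j$ sliding toward $1$ so that every point of $\T$ eventually escapes, while the paper instead feeds the problem into an abstract Baire-category lemma (Lemma~\ref{coco}, from the Bayart--Grosse-Erdmann--Nestoridis--Papadimitropoulos theory), which reduces Theorem~\ref{t3} to showing that the image under $S$ of the unit ball of $A_c$ intersected with $\phi(\Z)$ is dense in $C_\tau(\T)$. The paper then proves that density by a Hahn--Banach duality argument and Mandelbrojt's quasi-analyticity theorem (Theorem~Mb, part~II): if not dense, one gets a nonzero measure $\nu$ supported in some $J_k$ with $|\widehat\nu(n)|\le \e^{-c_{|n|}}$, forcing $\nu$ to have a smooth density vanishing outside $J_k$ which Theorem~Mb kills. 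Your route would, if completed, yield the same conclusion (and even the slight strengthening of Remark~\ref{last}) more explicitly, at the cost of the bookkeeping you lay out; the paper's route is shorter and in addition shows the universal sequences form a dense $G_\delta$ in $A_c$.

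However, the proposal has a genuine gap: the \emph{One-block lemma}, which you yourself flag as ``the technical core and the main obstacle of the proof,'' is stated but not proved, and the heuristic you offer for it (modulated Chebyshev/Fej\'er kernels, a Remez-type inequality, convexity from $n(c_{n+1}-c_n)\to\infty$) does not amount to a proof and is not obviously the right tool. The essential input here is precisely Mandelbrojt's quasi-analyticity theorem, which you never invoke. In fact your One-block lemma \emph{is} provable by exactly the paper's duality argument: if polynomials $Q$ with $\widehat Q(n)=0$ for $|n|\le N$ and $|\widehat Q(n)|\le\e^{\,c_{|n|}}$ were not dense in $C(\T\setminus J)$, Hahn--Banach and Riesz would give a nonzero measure $\nu$ on $\T\setminus J$ with $|\widehat\nu(n)|\le\e^{-c_{|n|}}$ for $|n|>N$; since the finitely many remaining coefficients are bounded, one may modify $c$ on $[0,N]$ to a $c'\in\X_+$ (the membership conditions in $\X_+$ depend only on the tail) so that $|\widehat\nu(n)|\le\e^{-c'_{|n|}}$ for \emph{all} $n$, and then Theorem~Mb~II forces $\nu=0$. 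Without this (or an equivalent) step, the proposal does not establish the theorem.
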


Theorems~\ref{t2}, \ref{t3} and Remark~\ref{r1} imply the following
corollary.

\begin{corollary}\label{c1} There exists a sequence
$\{a_n\}_{n\in\Z}$ such that the trigonometric series
$\sum\limits_{n=-\infty}^\infty a_nz^n$ is pointwise universal and
$a_n=O(\e^{\,|n|\ln^{-1}\!|n|})$. On the other hand, for each
$\epsilon>0$ and any sequence $\{a_n\}_{n\in\Z}$ satisfying
$a_n=O(\e^{\,|n|\ln^{-1-\epsilon}\!|n|})$ for some $\epsilon>0$, the
trigonometric series $\sum\limits_{n=-\infty}^\infty a_nz^n$ is not
pointwise universal.
\end{corollary}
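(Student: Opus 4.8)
The corollary is a straightforward packaging of Theorems \ref{t2} and \ref{t3} together with the concrete sequences exhibited in Remark \ref{r1}; the only genuine content is matching the hypotheses of those theorems to the growth conditions $a_n=O(\e^{\,|n|\ln^{-1}|n|})$ and $a_n=O(\e^{\,|n|\ln^{-1-\epsilon}|n|})$. So the plan is first to record that it suffices to prove each half for a conveniently normalized sequence $c$, and then to invoke the relevant theorem.

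For the positive statement, I would set $c_n=(n+1)\ln^{-1}(n+2)$ for $n\in\Z_+$. By Remark \ref{r1} this sequence lies in $\X_+$, so Theorem \ref{t3} provides a bilateral sequence $a=\{a_n\}_{n\in\Z}$ with $|a_n|\le \e^{\,c_{|n|}}$ for all $n\in\Z$ and with $\RR_a(\T)=C(\T)$; in particular $\sum_n a_nz^n$ is pointwise universal. It remains to check that $\e^{\,c_{|n|}}=O(\e^{\,|n|\ln^{-1}|n|})$ as $|n|\to\infty$, i.e. that $c_{|n|}-|n|\ln^{-1}|n|$ is bounded above. Writing $m=|n|$ and comparing $\frac{m+1}{\ln(m+2)}$ with $\frac{m}{\ln m}$, one sees the difference tends to a finite limit (indeed to $0$) by an elementary estimate, which gives the required bound and hence $a_n=O(\e^{\,|n|\ln^{-1}|n|})$.

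For the negative statement, suppose $\{a_n\}_{n\in\Z}$ satisfies $a_n=O(\e^{\,|n|\ln^{-1-\epsilon}|n|})$ for some $\epsilon>0$. Fix a constant $C$ with $|a_n|\le C\e^{\,|n|\ln^{-1-\epsilon}|n|}$ for all large $|n|$, and put $c_n=(n+1)\ln^{-1-\epsilon/2}(n+2)$. By Remark \ref{r1} (applied with $\epsilon/2$ in place of $\epsilon$) we have $c\in\X_-$. Since $|n|\ln^{-1-\epsilon}|n|+\ln C$ is eventually dominated by $(|n|+1)\ln^{-1-\epsilon/2}(|n|+2)$ — because $\ln^{-1-\epsilon}|n|=o(\ln^{-1-\epsilon/2}|n|)$ absorbs both the additive constant and the constant factor — we get $|a_n|\le \e^{\,c_{|n|}}$ for all $n\in\Z$ after, if necessary, enlarging finitely many values of $c_n$ (which does not affect membership in $\X_-$, as that class is determined by tail behaviour). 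Theorem \ref{t2} then forces $\RR_a(J)$ to have at most one element for every interval $J\subseteq\T$, so $\RR_a(\T)\ne C(\T)$ and the series is not pointwise universal.

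The only point requiring any care — and the one I would write out explicitly — is the last asymptotic comparison: that a bound of the form $|a_n|\le C\e^{\,|n|\ln^{-1-\epsilon}|n|}$ can be rewritten, for all $n$ and not merely asymptotically, as $|a_n|\le\e^{\,c_{|n|}}$ with $c\in\X_-$. This is where one must be slightly deliberate about the additive $\ln C$ term and about finitely many small indices, but it is entirely routine once one notes that replacing $\epsilon$ by $\epsilon/2$ leaves an unbounded logarithmic gap $\ln^{\epsilon/2}|n|$ to spare and that altering finitely many terms of $c$ changes neither $\lim_n n(c_{n+1}-c_n)$ nor the convergence of $\sum_n c_n/(n^2+1)$. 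No new ideas beyond Theorems \ref{t2} and \ref{t3} are needed.
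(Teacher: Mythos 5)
Your proof is correct and takes the same route the paper intends: the paper states that Theorems~\ref{t2}, \ref{t3} and Remark~\ref{r1} imply the corollary without writing out the details, and your argument is exactly the intended deduction. The two asymptotic comparisons you flag (that $\frac{m+1}{\ln(m+2)}-\frac{m}{\ln m}\to 0$, so the positive half of the claim follows, and that the constant $C$ and the finitely many small indices can be absorbed by replacing $\epsilon$ with $\epsilon/2$ and enlarging finitely many terms of $c$, which does not disturb membership in $\X_-$) are the only places where care is needed, and you handle both correctly.
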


The following question remains open.

\begin{question}\label{que1} Does the conclusion of
Theorem~$\ref{t3}$ remain true under the extra condition that
$a_n=0$ for $n<0$?
\end{question}

\section{Proof of Proposition~\ref{t1}}

Let $a=\{a_n\}_{n\in\Z_+}$ be the sequence provided by Theorem~S. We
shall simply show that it satisfies all requirements of
Proposition~\ref{t1}. Let $f\in C(\T)$ and $J_k=\{e^{it}:0\leq t\leq
2\pi-k^{-1}\}$ for $k\in\N$. Since each $J_k$ is compact, has empty
interior in $\C$ and has connected complement in $\C$, we can use
the classical Mergelyan theorem to pick a sequence
$\{p_k\}_{k\in\N}$ of polynomials such that $|f(z)-p_k(z)|<2^{-k-1}$
for any $k\in\N$ and $z\in J_k$. By Theorem~S, we can choose a
strictly increasing sequence $\{n_k\}_{k\in\N}$ of positive integers
such that $|p_k(z)-S_k(z)|<2^{-k-1}$ for any $k\in\N$ and $z\in
J_k$, where $S_k(z)=\sum\limits_{j=0}^{n_k} a_jz^j$. Hence
$|f(z)-S_k(z)|<2^{-k}$ for any $k\in\N$ and $z\in J_k$. Since each
$z\in\T$ belongs to $J_k$ for each sufficiently large $k$, we see
that $S_k(z)\to f(z)$ as $k\to\infty$ for any $z\in\T$. The proof of
Proposition~\ref{t1} is complete.

\section{Proof of Theorem~\ref{t2}}

We need the following result due to Mandelbrojt \cite{man1}, which
is closely related to the Denjoy--Carleman theorem characterizing
quasi-analytic classes of infinitely differentiable functions. A
formally slightly weaker statement can be found in an earlier work
of Mandelbrojt \cite{man2}, although one can easily see that this
weaker form is actually equivalent to the stronger one. See also
\cite{rudin} for a different proof.

\begin{thmMb} {\rm I.} \ If $c\in \X_-$,
then for any interval $J$ of $\T$ there exists a non-zero infinitely
differentiable function $f:\T\to [0,\infty)$ such that $f(z)=0$ for
$z\in\T\setminus J$ and $|\widehat f(n)|\leq \e^{-c_{|n|}}$ for each
$n\in\Z$.

{\rm II.} \ If $c\in \X_+$, $f:\T\to\C$ is infinitely
differentiable, $|\widehat f(n)|\leq \e^{-c_{|n|}}$ for each
$n\in\Z$ and there is $z_0\in\T$ such that $f^{(j)}(z_0)=0$ for each
$j\in\Z_+$, then $f$ is identically $0$.
\end{thmMb}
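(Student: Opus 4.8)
The plan is to derive both halves of the statement from the Denjoy--Carleman theory of quasi-analytic classes, through the familiar dictionary relating the decay of the Fourier coefficients $|\widehat f(n)|\le\e^{-c_{|n|}}$ to the growth of the successive derivatives of $f$. Extend $c$ to a positive function on $[1,\infty)$, linearly in $\ln r$ between integer points, and associate with it the sequence $M_k=2\sum_{n\ge0}n^k\e^{-c_n}$, the convex function $h(k)=\sup_{n\ge1}(k\ln n-c_n)$, and the trace function $T(r)=\sup_{k\ge0}r^k\e^{-h(k)}$. One checks that $\{M_k\}$ is log-convex (by H\"older's inequality), that $\ln M_k=h(k)+O(\ln k)$ by a Laplace-type comparison of $\sum_n n^k\e^{-c_n}$ with its largest term, and that $h$ is the Legendre transform of $r\mapsto c_r$ in the variable $\ln r$, so that $\ln T(r)=\sup_k(k\ln r-h(k))$ is the largest function of $r$ convex in $\ln r$ that lies below $c$. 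The hypothesis $\lim_n n(c_{n+1}-c_n)=+\infty$ is precisely what makes the passage between $c$ and its convex-in-$\ln r$ minorant (and, in Part~I, majorant) quantitatively harmless, so that $\sum_n c_n/n^2$ and $\int_1^\infty\ln T(r)\,r^{-2}\,dr$ converge or diverge together. Combined with the Denjoy--Carleman theorem --- $C\{M_k\}$ is quasi-analytic $\iff\sum_k M_k^{-1/k}=\infty\iff\int_1^\infty\ln T(r)\,r^{-2}\,dr=\infty$ --- this gives the working form of the result: $C\{M_k\}$ is quasi-analytic exactly when $\sum_n c_n/n^2=\infty$.

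For Part~I, let $c\in\X_-$, so $\sum_n c_n/n^2<\infty$. Here I would build $f$ directly as an infinite convolution adapted to $c$, rather than extract an abstract non-quasi-analytic bump (which would cost an uncontrolled dilation in the exponent and spoil the sharp constant). Using the hypotheses on $c$, pick a function $\Phi$, convex in $\ln r$, with $\Phi(r)\ge2c_r$ for all large $r$ and $\sum_n\Phi(n)/n^2<\infty$; resolving the increment $d\Phi/d\ln r$ into unit jumps yields a summable non-increasing sequence $\{a_k\}$ of positive reals, with $\sum_k a_k$ as small as desired (in particular below the length of $J$), such that $\sum_{k:\,a_kr\ge1}\ln(a_kr)\ge\Phi(r)$. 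Taking $\chi_k$ to be the normalized indicator of an arc of length $a_k$ and $\phi=\chi_1*\chi_2*\cdots$, regarded as supported inside $J$, we get $\phi\ge0$, $\phi\not\equiv0$, and $|\widehat\phi(n)|=\prod_k|\widehat\chi_k(n)|\le\prod_{k:\,a_k|n|\ge1}(a_k|n|)^{-1}\le\e^{-2c_{|n|}}$ for all large $|n|$ --- a bound decaying faster than every power of $|n|$, which by itself already forces $\phi\in C^\infty(\T)$. Multiplying $\phi$ by a sufficiently small positive constant to absorb the finitely many remaining low frequencies produces $f$ with $f\ge0$, $f\not\equiv0$, $f=0$ on $\T\setminus J$, and $|\widehat f(n)|\le\e^{-c_{|n|}}$ for every $n\in\Z$.

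For Part~II, let $c\in\X_+$, so $\sum_n c_n/n^2=\infty$ and hence, by the dictionary above, $\sum_k M_k^{-1/k}=\infty$; thus $C\{M_k\}$ is a quasi-analytic class on $\T$. Let $f$ be as in the statement. Parametrizing $\T$ by arc length, put $F(\theta)=f(e^{i\theta})$: this is a smooth $2\pi$-periodic function with $\widehat F(n)=\widehat f(n)$, and the chain rule gives $F^{(j)}(\theta_0)=0$ for all $j\in\Z_+$, where $z_0=e^{i\theta_0}$. From $|\widehat F(n)|\le\e^{-c_{|n|}}$ we obtain
$$
\|F^{(k)}\|_\infty\le\sum_{n\in\Z}|n|^k|\widehat F(n)|\le2\sum_{n\ge0}n^k\e^{-c_n}=M_k\qquad(k\in\Z_+),
$$
so $F$ belongs to the quasi-analytic class $C\{M_k\}$ and vanishes together with all its derivatives at $\theta_0$. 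A function in a quasi-analytic class that vanishes to infinite order at a point is identically zero (the set of such points is both open and closed in the connected space $\T$), so $F\equiv0$, that is, $f\equiv0$.

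I expect the main obstacle to be the dictionary of the first paragraph, and within it the equivalence $\sum_n c_n/n^2<\infty\iff\sum_k M_k^{-1/k}<\infty$: this is the Legendre-type duality between the weight $c$ and the defining sequence of its Carleman class, and its rigorous proof needs the Laplace comparison of $\sum_n n^k\e^{-c_n}$ with its maximal term, the biduality $c_n=\sup_k(k\ln n-h(k))$, and --- the genuinely delicate ingredient --- the comparison of $c$ with its convex-in-$\ln r$ minorant, and in Part~I with a suitable convex-in-$\ln r$ majorant. It is exactly at these regularity steps that the condition $\lim_n n(c_{n+1}-c_n)=+\infty$ is indispensable; this is the substance of Mandelbrojt's theorem. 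A lesser technical point is the verification in Part~I that $\Phi$ with the stated properties exists, that its integer-valued jump decomposition gives a summable $\{a_k\}$, and that the convolution has exactly the claimed support and sign.
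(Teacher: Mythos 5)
This is a statement the paper itself does not prove: Theorem~Mb is quoted from Mandelbrojt \cite{man1,man2} (with \cite{rudin} for an alternative treatment), so there is no internal proof to compare against. Your outline follows the classical route to such results --- the Denjoy--Carleman theorem plus the Legendre-type dictionary between decay of Fourier coefficients and growth of derivatives for Part~II, and an infinite convolution of normalized arc indicators for Part~I --- and the architecture is right. But as a proof it has a genuine gap, and it is exactly the one you flag yourself: the equiconvergence of $\sum_n c_n/(n^2+1)$ with $\sum_k M_k^{-1/k}$, and in Part~I the existence of a function $\Phi$, convex in $\ln r$, with $\Phi(r)\geq 2c_r$ and $\sum_n\Phi(n)/n^2<\infty$, are asserted rather than proved. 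These regularization statements \emph{are} Mandelbrojt's theorem; without them the argument is circular in the sense that the entire analytic content of the result has been deferred. Note in particular that a ``least convex majorant'' of an arbitrary sequence need not exist (an infimum of convex functions need not be convex), so producing $\Phi$ in Part~I from the hypothesis $n(c_{n+1}-c_n)\to+\infty$ requires an actual construction, not a selection; and without some such regularity the implication ``$\sum c_n/n^2=\infty\Rightarrow C\{M_k\}$ quasi-analytic'' is simply false, because only the convex-in-$\ln r$ minorant of $c$ is visible to the sequence $M_k$.

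Two of the quantitative claims in your dictionary also need repair. The assertion $\ln M_k=h(k)+O(\ln k)$ is not justified: the Laplace comparison gives $M_k\leq C\,\e^{\,h(k+2)}$, and $h(k+2)-h(k)$ is of order $2\ln N(k)$ where $N(k)$ is the maximizer of $n\mapsto k\ln n-c_n$; for slowly growing $c$ (e.g.\ $c_n=\ln^2 n$, which lies in $\X_-$) one has $\ln N(k)\sim k/2$, so the error is not $O(\ln k)$. This does not doom Part~II (for $c\in\X_+$ one can show the relevant maximizers grow polynomially), but it must be argued, not stated. Similarly, in Part~I the bound $|\widehat\chi_k(n)|\leq(a_k|n|)^{-1}$ should read $|\sin(na_k/2)|/(na_k/2)\leq 2/(a_k|n|)$, a harmless constant, but the passage from ``$\Phi$ exists'' to ``a summable non-increasing sequence $\{a_k\}$ with $\sum_{k:\,a_kr\geq1}\ln(a_kr)\geq\Phi(r)$ exists'' rests on the identity $\sum_k\ln^+(a_kr)=\int_1^r N(s)s^{-1}\,ds$ together with $\sum_k a_k=\int_1^\infty N(s)s^{-2}\,ds$, and the finiteness of the latter is again the regularization issue. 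In short: correct skeleton, correctly located difficulty, but the difficulty itself is not resolved, so the proposal is a roadmap to Mandelbrojt's proof rather than a proof.
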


We are ready to prove Theorem~\ref{t2}. Assume the contrary. Then
there exists an interval $J$ in $\T$, two different functions
$f,g\in C(J)$ and two strictly increasing sequences
$\{m_n\}_{n\in\Z_+}$ and $\{k_n\}_{n\in\Z_+}$ of positive integers
such that $S^a_{k_n}(z)$ converges $f(z)$ and $S^a_{m_n}(z)$
converges $g(z)$ as $n\to\infty$ for each $z\in J$. Passing to
subsequences, if necessary, we can without loss of generality assume
that $m_n<k_n<m_{n+1}$ for each $n\in\Z_+$. Since $f\neq g$, there
is $z_0\in J$ such that $f(z_0)\neq g(z_0)$. Denote
$p_n=C(S^a_{k_n}-S^a_{m_n})$, where $C=2(f(z_0)-g(z_0))^{-1}$. Then
$p_n(z)$ converges to $h(z)$ for each $z\in J$, where
$h(z)=C(f(z)-g(z))$. Clearly $h\in C(J)$ and $h(z_0)=2$. Since $h$
is continuous, we can pick an interval $J_0\subseteq J$ such that
$\Re h(z)\geq 1$ for each $z\in J_0$. We shall prove the following
statement:
\begin{equation}\label{Q}
\text{for any interval $I\subset J_0$ there is $n_0\in\N$ such that
$\min\limits_{z\in I}\Re p_n(z)<0$ for any $n\geq n_0$}.
\end{equation}

Suppose that (\ref{Q}) is not satisfied. Then there exist an
interval $I\subset J_0$ and a strictly increasing sequence
$\{j_n\}_{n\in\Z_+}$ of positive integers satisfying
$\min\limits_{z\in I}\Re p_{j_n}(z)\geq 0$ for any $n\in\Z_+$. Take
a sequence $c\in \X_-$ such that $|a_n|\leq \e^{\,c_{|n|}}$ for any
$n\in\Z$. It is easy to see that the sequence $\{d_n\}_{n\in\Z_+}$
also belongs $\X_-$, where $d_n=c_n+\ln(n^2+1)$. By Theorem~Mb there
is a non-zero infinitely differentiable function $\rho:\T\to
[0,\infty)$ such that $\rho(z)=0$ for $z\in\T\setminus I$ and
$|\widehat \rho(n)|\leq \e^{-d_{|n|}}=(n^2+1)^{-1}\e^{-c_{|n|}}$ for
each $n\in\Z$. Since $\rho$ vanishes outside $I$, $\Re p_{j_n}\geq
0$ on $I$ and $p_{j_n}$ converge to $h$ pointwise on $J\supseteq I$,
we see that $\rho\,\Re p_{j_n}$ is a sequence of non-negative
continuous functions pointwise convergent to $\rho\,\Re h$. By the
Fatou Theorem
\begin{equation}\label{01}
\ilim_{n\to\infty}\int\limits_\T \rho(z)\,\Re
p_{j_n}(z)\,\mu(dz)\geq \int\limits_\T \rho(z)\,\Re
h(z)\,\mu(dz)\geq \int\limits_I \rho(z)\,\mu(dz)>0
\end{equation}
since $\Re h(z)\geq 1$ for each $z\in J_0\supseteq I$. On the other
hand, using (\ref{onb}), we obtain
\begin{align*}
&\int_\T \rho(z)\Re p_{n}(z)\,\mu(dz)=\Re
\langle\rho,p_{n}\rangle_{L_2(\T)}\leq
|\langle\rho,p_{n}\rangle_{L_2(\T)}|=\biggl|
\sum_{l\in\Z}\widehat{p_{n}}(l)\overline{\widehat \rho(l)}\biggr|=
\biggl|\sum_{m_{n}<|l|\leq k_{n}}\!\!\!\! Ca_l\overline{\widehat
\rho(l)}\biggr|\leq
\\
&\qquad\qquad\leq \sum_{m_{n}<|l|\leq
k_{n}}\frac{|C||a_l|\e^{-c_{|l|}}}{l^2+1}\leq \sum_{m_{n}<|l|\leq
k_{n}} \frac{|C|}{l^2+1}=O(m_{n}^{-1})=o(1)\ \ \text{as
$n\to\infty$.}
\end{align*}
The above display contradicts (\ref{01}), which completes the proof
of (\ref{Q}).

Using (\ref{Q}) and continuity of $p_n$, we can pick  a sequence
$\{J_n\}_{n\in\Z_+}$ of intervals in $\T$ starting from $J_0$ and a
strictly increasing sequence $\{r_n\}_{n\in\N}$ of positive integers
such that $J_{n+1}\subseteq J_n$ for each $n\in\Z_+$ and $\Re
p_{r_n}(z)<0$ for each $z\in J_n$, $n\in\N$. Since any decreasing
sequence of compact sets has nonempty intersection, there is $w\in
\bigcap\limits_{n=0}^\infty J_n$. Then $\Re p_{r_n}(w)<0$ for any
$n\in\N$. Since $p_n(w)\to h(w)$, we have $\Re h(w)\leq 0$. On the
other hand, $\Re h(w)\geq 1$ since $w\in J_0$. This contradiction
completes the proof of Theorem~\ref{t2}.

\section{Proof of Theorem~\ref{t3}}

First, we introduce some notation. Symbol $\omega(\Z)$ stands for
the space $\C^\Z$ of all complex bilateral sequences with the
coordinatewise convergence topology, while $\phi(\Z)$ is the
subspace of $\omega(\Z)$ of sequences with finite support. That is,
$x\in\omega(\Z)$ belongs to $\phi(\Z)$ if and only if there is
$n\in\N$ such that $x_j=0$ if $|j|>n$. Let also $\{e_n\}_{n\in\Z}$
be the canonical linear basis of $\phi(\Z)$. The next result is a
part of Theorem~1 from \cite{u-s}, formulated in an equivalent form
and adapted to sequences labeled by $\Z$ rather than $\Z_+$. For
sake of completeness we sketch its proof.

\begin{lemma}\label{coco} Let $X$ be a metrizable topological vector
space, $A$ be a linear subspace of $\omega(\Z)$ endowed with its own
topology, which turns $A$ into a complete metrizable topological
vector space such that the topology of $A$ is stronger than the one
inherited from $\omega(\Z)$ and $\phi(\Z)$ is a dense linear
subspace of $A$. Assume also that $S:\phi(\Z)\to X$ is a linear map
and $S(U\cap \phi(\Z))$ is dense in $X$ for any neighborhood $U$ of
zero in $A$. Then there exists $a\in A$ such that the series
$\sum\limits_{n=-\infty}^\infty a_nSe_n$ is universal in $X$.
\end{lemma}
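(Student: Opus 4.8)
I would prove Lemma~\ref{coco} by the standard Baire-category argument for universal series, carried out over the complete metric space $A$. Fix a translation-invariant complete metric $\rho_X$ on $X$ and $\rho_A$ on $A$. Since $X$ is metrizable and separable-free issues do not arise (we only need density), I would first fix a countable dense set $\{y_m:m\in\N\}$ in $X$; it suffices to produce $a\in A$ such that for every $m$ there is a sequence of partial sums $\sum_{j=-n_k}^{n_k}a_jSe_j$ converging to $y_m$, and then a standard diagonal/enumeration argument upgrades this to universality for all $x\in X$ (approximate $x$ by the $y_m$ and interleave). For $a\in\phi(\Z)$ write $P_n(a)=\sum_{j=-n}^{n}a_jSe_j\in X$; note $P_n$ is defined on all of $\phi(\Z)$ and $P_n(a)$ depends only on finitely many coordinates of $a$, hence extends continuously in $a$ for fixed $n$ on any reasonable topology — but I will only need $P_n$ on $\phi(\Z)$.

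\emph{The key sets.} For $m\in\N$, $k\in\N$ define
\[
E_{m,k}=\Bigl\{a\in A:\ \exists\,n\ge k,\ \exists\,b\in\phi(\Z)\ \text{with}\ \rho_A(a,b)<\tfrac1k\ \text{and}\ \rho_X\bigl(P_n(b),y_m\bigr)<\tfrac1k\Bigr\}.
\]
I would show each $E_{m,k}$ is open and dense in $A$. Openness is immediate from the strict inequalities and translation invariance. For density, fix $a_0\in A$ and $\delta>0$. Since $\phi(\Z)$ is dense in $A$, pick $b_0\in\phi(\Z)$ with $\rho_A(a_0,b_0)<\delta/2$, supported in $\{|j|\le N_0\}$. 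Now I use the hypothesis: the set $U=\{u\in A:\rho_A(u,0)<\delta/3\}$ is a neighbourhood of $0$ in $A$, so $S(U\cap\phi(\Z))$ is dense in $X$; hence there is $u\in U\cap\phi(\Z)$ with $\rho_X(Su,\,y_m-P_{N_0}(b_0))$ small. The crucial point — where I must be slightly careful — is that $u$ is a finite linear combination of the $e_j$ but possibly with indices $\le N_0$; I instead apply the hypothesis to the smaller neighbourhood obtained after first translating, or more cleanly, I replace $u$ by a tail-shifted version: because $S(V\cap\phi(\Z))$ is dense in $X$ for \emph{every} neighbourhood $V$ of $0$, and in particular I may intersect with the (dense, since it contains arbitrarily small elements of the span of high-index $e_j$'s once we know density... ) — here I should instead argue directly that I can choose $u$ supported on $\{|j|>N_0\}$. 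This is the one spot needing thought.

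\emph{Resolving the support issue.} To get $u$ supported on high indices, note that the span of $\{e_j:|j|>N\}$ is dense in $A$ for every $N$: indeed $\phi(\Z)$ is dense, and for $b\in\phi(\Z)$ and a neighbourhood $W$ of $b$, pick a neighbourhood $W_0$ of $0$ with $b+W_0\subseteq W$; the hypothesis applied with $U=W_0$ only gives density of the image, not of $W_0\cap\phi(\Z)$ itself — so this needs the extra input that in a complete metrizable TVS $A$ with $\phi(\Z)$ dense, the closed span of any cofinite subfamily of a basis-like system is all of $A$ when... Rather than belabor this, I would follow the cleaner route of \cite{u-s}: set $b=b_0+u$ with $u\in\phi(\Z)$ chosen via the hypothesis so that $\rho_A(u,0)<\delta/2$ \emph{and} $\rho_X(P_n(b),y_m)<1/k$ for some large $n\ge k$, observing that $P_n(b)=P_n(b_0)+Su$ when we are allowed to pick $n$ first and then apply the density of $S(U\cap\phi(\Z))$ to the target $y_m-P_{n}(b_0)$ — and $P_n(b_0)=P_{N_0}(b_0)$ for $n\ge N_0$, so we may take any $n\ge\max(k,N_0)$ and the single density hypothesis suffices with $U=\{\rho_A(\cdot,0)<\delta/2\}$. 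Then $\rho_A(a_0,b)\le\rho_A(a_0,b_0)+\rho_A(b_0,b)=\rho_A(a_0,b_0)+\rho_A(0,u)<\delta$, witnessing $b\in E_{m,k}$ within $\delta$ of $a_0$.

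\emph{Conclusion.} By the Baire category theorem in the complete metric space $A$, the set $G=\bigcap_{m,k}E_{m,k}$ is dense, hence nonempty; pick $a\in G$. For fixed $m$ and each $k$, membership in $E_{m,k}$ gives $n_k\ge k$ and $b^{(k)}\in\phi(\Z)$ with $\rho_A(a,b^{(k)})<1/k$ and $\rho_X(P_{n_k}(b^{(k)}),y_m)<1/k$. Since the topology of $A$ is stronger than that of $\omega(\Z)$, $b^{(k)}\to a$ coordinatewise; as $P_{n_k}$ involves only coordinates with $|j|\le n_k$, for each fixed $k$ I would instead directly compare $P_{n_k}(a)$ with $P_{n_k}(b^{(k)})$. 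To make this work I actually need the coordinates of $b^{(k)}$ on $\{|j|\le n_k\}$ to agree with those of $a$ — so in the definition of $E_{m,k}$ I should replace ``$b\in\phi(\Z)$ with $\rho_A(a,b)<1/k$'' by the stronger, still-dense-and-open-producing condition that $b$ agrees with $a$ on $\{|j|\le n\}$; but $a\in A\setminus\phi(\Z)$ in general so ``$b\in\phi(\Z)$ agreeing with $a$'' forces $b=$ truncation of $a$, and then $\rho_X(P_n(a),y_m)=\rho_X(P_n(b),y_m)<1/k$ directly, and the condition ``this truncation lies within $1/k$ of $a$ in $\rho_A$'' is exactly a condition on $a$. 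Thus redefine
\[
E_{m,k}=\Bigl\{a\in A:\ \exists\,n\ge k,\ \rho_A\bigl(a,T_n a\bigr)<\tfrac1k\ \text{and}\ \rho_X\bigl(P_n(a),y_m\bigr)<\tfrac1k\Bigr\},
\]
where $T_na\in\phi(\Z)$ is the truncation of $a$ to $\{|j|\le n\}$; this is open (finitely many continuous conditions), and dense by the argument above with $b=b_0+u$, since $T_n b=b$ for $n\ge$ support of $b$. Then for $a\in\bigcap E_{m,k}$ we get $P_{n_k}(a)\to y_m$, i.e.\ $a$ has the required approximation property for every $y_m$, and the diagonal argument over a dense sequence $\{y_m\}$ finishes the proof that $\sum_{n=-\infty}^\infty a_nSe_n$ is universal in $X$.

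\emph{Main obstacle.} The only real subtlety — and the reason the lemma is stated the way it is — is reconciling ``$a\in A$ is not finitely supported'' with ``$P_n$ sees only finitely many coordinates''; the fix is to phrase the Baire sets intrinsically in terms of $a$ and its own truncations $T_na$ (legitimate since $T_na\in\phi(\Z)\subset A$), using the density hypothesis $S(U\cap\phi(\Z))$ dense in $X$ only to prove density of these sets, and using the hypothesis ``topology of $A$ stronger than $\omega(\Z)$'' to guarantee $\rho_A(a,T_na)\to 0$ along a subsequence so that such $n$ exist. Everything else is the routine Baire-category bookkeeping.
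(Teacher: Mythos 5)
Your proof is correct and follows essentially the same Baire-category argument as the paper: you work in the complete metrizable space $A$, build a countable family of open dense sets via the (continuously extended, finitely-supported) partial-sum operators $P_n\colon A\to X$, prove density from the density of $\phi(\Z)$ in $A$ together with the hypothesis on $S(U\cap\phi(\Z))$, and take the intersection. The paper streamlines the bookkeeping by using a countable base $\{W_k\}$ of $X$ and the equivalence of Remark~\ref{rr} (universality iff density of the set of partial sums) in place of your dense-sequence-plus-diagonal argument, and your extra clause $\rho_A(a,T_na)<1/k$ in the final definition of $E_{m,k}$ is redundant, but these are cosmetic differences.
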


\begin{proof} For each $n\in\N$, let $P_n:A\to A$ be the linear map
defined by the formula $P_n a=\sum\limits_{j=-n}^n a_je_j$. Since
the topology of $A$ is stronger than the one inherited from
$\omega(\Z)$, $P_n$ are continuous linear projections on $A$ and
$SP_n$ are continuous linear operators from $A$ to $X$. We use
symbol ${\cal U}$ to denote the set of $a\in A$ such that the series
$\sum\limits_{n=-\infty}^\infty a_nSe_n$ is universal in $X$. Since
the space $S(\phi(\Z))$ of countable algebraic dimension is dense in
$X$, $X$ is separable. Since $X$ is also metrizable, there is a
countable base $\{W_k:k\in\N\}$ of topology of $X$. It is
straightforward to see that
\begin{equation}\label{formula}
{\cal U}=\bigl\{a\in A:\{SP_na:n\in\Z_+\}\ \ \text{is dense in
$X$}\bigr\}=\bigcap_{k=1}^\infty \Omega_k,\ \ \text{where}\ \
\Omega_k=\bigcup_{n=1}^\infty (SP_n)^{-1}(W_k).
\end{equation}
Since the operators $SP_n:A\to X$ are continuous, the sets
$\Omega_k$ are open in $A$.

Linearity of $S$ and density of $S(\phi(\Z)\cap U)$ in $X$ for each
neighborhood of 0 in $A$ imply that $S(\phi(\Z)\cap U)$ is dense in
$A$ for any open $U\subseteq A$ such that $U\cap
\phi(\Z)\neq\varnothing$. Since $\phi(\Z)$ is dense in $A$, we see
that $S(U\cap \phi(\Z))$ is dense in $X$ for any non-empty open
subset $U$ of $A$. Hence for each $k\in\N$ and a non-empty open
subset $U$ of $A$, there is $a\in \phi(\Z)\cap U$ such that $Sa\in
W_k$. Since $a\in\phi(\Z)$, $P_na=a$ for each sufficiently large
$n\in\N$. It follows that $a\in \Omega_k$. Thus $\Omega_k\cap
U\neq\varnothing$ for any non-empty open subset $U$ of $A$. That is,
$\Omega_k$ is a dense open subset of $A$ for each $k\in\N$. Since
$A$ is complete and metrizable, the Baire theorem and
(\ref{formula}) imply that $\cal U$ is a dense $G_\delta$-set in
$A$. In particular, ${\cal U}\neq\varnothing$ as required.
\end{proof}

We are ready to prove Theorem~\ref{t3}. For $k\in\N$ let
\begin{equation}\label{pk}
\text{$J_k=\{e^{it}:0\leq t\leq 2\pi-k^{-1}\}$ \ \ and \ \
$p_k:C(\T)\to\R$, \ $p_k(f)=\sup\limits_{z\in J_k}|f(z)|$.}
\end{equation}
Consider the topological vector space $C_\tau(\T)$ being $C(\T)$
with the topology $\tau$ defined by the sequence $\{p_k\}_{k\in\N}$
of seminorms. Then $C_\tau(\T)$ is metrizable and locally convex.
Moreover, a sequence $\{f_n\}_{n\in\N}$ converges to $f$ in
$C_\tau(\T)$ if and only if $f_n$ converges to $f$ uniformly on
$J_k$ for each $k\in\N$. Since the union of $J_k$ is $\T$,
$\tau$-convergence implies pointwise convergence. Hence $\tau$ is
stronger than the topology of $C_p(\T)$. Thus any series universal
in $C_\tau(\T)$ is also universal in $C_p(\T)$. Let $c\in\X_+$.
Consider the space $A_c$ of complex bilateral sequences
$a=\{a_n\}_{n\in\Z}$ such that $a_n=o\bigl(\e^{\,c_{|n|}}\bigr)$ as
$n\to\infty$. The space $A_c$ with the norm
$\|a\|=\sup\limits_{n\in\Z}|a_n|\e^{-c_{|n|}}$ is a Banach space
isometric to $c_0$. It is also straightforward to see that
$\phi(\Z)$ is dense in $A_c$ and that the topology of $A_c$ is
stronger than the one inherited from $\omega(\Z)$. In order to prove
Theorem~\ref{t3} it is enough to find $a\in A_c$ such that the
trigonometric series $\sum\limits_{n=-\infty}^\infty a_nz^n$ is
universal in $C_\tau(\T)$. Consider the linear map $S:\phi(\Z)\to
C(\T)$ such that $Se_n(z)=z^n$ for $n\in\Z$ and $z\in\T$. By
Lemma~\ref{coco}, it suffices to demonstrate  that for any
neighborhood $U$ of zero in $A_c$, $S(U\cap \phi(\Z))$ is dense in
$C_\tau(\T)$. Since $A_c$ is a normed space, it is enough to show
that $\Omega=S(U\cap \phi(\Z))$ is dense in $C_\tau(\T)$, where
$U=\{a\in A_c:\|a\|\leq 1\}$. Assume the contrary. Then $\Omega$ is
not dense in $C_\tau(\T)$. Since $\Omega$ is convex and balanced
(=stable under multiplication by $z\in\C$ with $|z|\leq 1$) and the
topological vector space $C_\tau(\T)$ is locally convex, the
Hahn--Banach theorem implies that there exists a non-zero continuous
linear functional $F:C_\tau(\T)\to\C$ such that $|F(f)|\leq 1$ for
any $f\in\Omega$. Since the increasing sequence $\{p_k\}_{k\in\N}$
of seminorms defines the topology $\tau$ and $F$ is
$\tau$-continuous, there exists $k\in\N$ such that $F$ is bounded
with respect to the seminorm $p_k$. The definition of $p_k$ and the
Riesz theorem on the shape of continuous linear functionals on
Banach spaces $C(K)$ imply that there exists a non-zero finite Borel
$\sigma$-additive complex-valued measure $\nu$ on $\T$ supported on
$J_k$ and such that
$$
F(f)=\int_{\T}f(z)\,\nu(dz)=\int_{J_k}f(z)\,\nu(dz)\ \ \text{for any
$f\in C(\T)$.}
$$
Since $\bigl\|\e^{\,c_{|n|}}e_n\bigr\|=1$ for each $n\in\Z$, we see
that $f_n\in\Omega$ for any $n\in\Z$, where
$f_n(z)=\e^{\,c_{|n|}}z^{-n}$. Since $|F(f)|\leq 1$ for any
$f\in\Omega$, we have $|F(f_n)|\leq1$ for $n\in\Z$. Hence
$$
1\geq |F(f_n)|=\e^{\,c_{|n|}}\biggl|\int_\T
z^{-n}\nu(dz)\biggr|=\e^{\,c_{|n|}}|\widehat \nu(n)|\ \ \text{for
any}\ \ n\in\Z.
$$
Thus $|\widehat \nu(n)|\leq \e^{-c_{|n|}}$ for any $n\in\Z$. Since
$c\in\X_+$, we have $\lim\limits_{k\to\infty}
k(c_{k+1}-c_k)=+\infty$. It immediately follows that
$\lim\limits_{k\to\infty}\frac{c_k}{\ln k}=\infty$. Hence
$\e^{-c_k}=o(k^{-j})$ as $k\to\infty$ for every $j\in\N$. Since
$|\widehat \nu(n)|\leq \e^{-c_{|n|}}$ for any $n\in\Z$, we get
$|\widehat \nu(n)|=o(|n|^{-j})$ as $|n|\to\infty$ for each $j\in\N$.
Hence $\nu$ is absolutely continuous with respect to $\mu$ and the
density (=Radon--Nikodym derivative) $\rho=\frac{d\nu}{d\mu}$ is an
infinitely differentiable complex valued function on $\T$. Since
$\widehat\rho(n)=\widehat\nu(n)$ for any $n\in\Z$, we obtain
$|\widehat \rho(n)|\leq \e^{-c_{|n|}}$ for any $n\in\Z$. Since $\nu$
is supported on $J_k$, $\rho$ vanishes on $\T\setminus J_k$. By
Theorem~Mb, $\rho=0$ and therefore $\nu=0$. We have arrived to a
contradiction, which completes the proof of Theorem~\ref{t3}.

\begin{remark}\label{last}\rm We have actually proven a slightly
stronger result than the one stated in Theorem~\ref{t3}. Namely, we
have shown that for each $c\in\X_+$, there is a sequence
$a=\{a_n\}_{n\in\Z}$ of complex numbers such that $|a_n|\leq
\e^{\,c_{|n|}}$ for every $n\in\Z$ and the trigonometric series
$\sum\limits_{n=-\infty}^\infty a_nz^n$ is universal in $C(\T)$
endowed with the metrizable locally convex topology $\tau$ (defined
by the sequence $\{p_k\}_{k\in\N}$ of seminorms from $(\ref{pk})$)
stronger than the pointwise convergence topology.
\end{remark}

\medskip

{\bf Acknowledgements.} \ The author is grateful to the referees for
helpful comments and suggestions.

%Partially supported by Plan Nacional I+D+I
%Grant BFM2003-00034, Junta de Andaluc\'{\i}a FQM-260 and British
%Engineering and Physical Research Council Grant GR/T25552/01.

\small

\end{document}